\documentclass[reqno]{amsart}

\usepackage[T1]{fontenc}
\usepackage[american]{babel}
\usepackage{lmodern}
\usepackage{microtype}
\usepackage{amsmath,amssymb,amsthm}
\usepackage[hidelinks]{hyperref}
\hypersetup{
  pdftitle={Prime-Detecting Identities from Dirichlet Inversion and Jordan Totients},
  pdfauthor={Zhichen Liu},
  pdfkeywords={arithmetic functions, Dirichlet inverse, Jordan totient, primes}
}

\newcommand{\N}{\mathbb{N}}
\newcommand{\eps}{\varepsilon}
\newcommand{\one}{\mathbf{1}}
\newcommand{\Dinv}[1]{#1^{(-1)}}

\newtheorem{theorem}{Theorem}
\newtheorem{proposition}[theorem]{Proposition}

\theoremstyle{remark}

\title[Prime-Detecting Identities from Dirichlet Inversion]
{Prime-Detecting Identities from Dirichlet Inversion and Jordan Totients}
\author{Zhichen Liu}
\date{}

\subjclass[2020]{Primary 11A25; Secondary 11A41}
\keywords{arithmetic functions, Dirichlet inverse, Jordan totient function,
M\"obius function, prime characterization}

\begin{document}

\begin{abstract}
For each integer $k\geq1$, let $\sigma_k(n)=\sum_{d\mid n}d^k$, let
$\Dinv{\sigma_k}$ denote its Dirichlet inverse, and let $J_k$ denote the
$k$th Jordan totient function.  Using established prime-power values of
$\Dinv{\sigma_k}$, we first obtain the elementary criterion
$\Dinv{\sigma_k}(n)+J_k(n)+2=0$ precisely when $n$ is prime.  At
$k=1$, this involves Euler's totient $J_1=\varphi$ and provides a
prime-only comparison for the next equation.  Our main
result classifies the zeros of
$\Dinv{\sigma_k}(n)+\mu(n)+J_k(n)+3$: for $k=1$, they are the primes and
$18$; for $k\geq2$, they are only the primes.  The proof separates
integers by prime-exponent pattern.  Sign and divisibility arguments
exclude composite zeros outside the cube-free nonsquarefree case, where
an additional divisibility condition isolates $18$ at $k=1$ and
precludes composite zeros for higher $k$.  Standard convolution and
Lambert-series identities provide the arithmetic framework, and an
intermediate-divisor relation explains the exceptional zero.  The
resulting criteria are structural characterizations rather than
efficient primality tests, because direct evaluation of the
multiplicative formulas generally presupposes a factorization of $n$.
\end{abstract}

\maketitle

\section{Introduction}

Prime characterizations built from arithmetic functions are most useful
when the functions involved share a structural origin.  The identity studied
here arises from two standard transforms of the completely multiplicative
function $I_k(n)=n^k$.  Under Dirichlet convolution,
\begin{equation}
  \sigma_k=\one*I_k,
  \qquad
  J_k=\mu*I_k,
\end{equation}
where $\one(n)=1$ and $\mu$ is the M\"obius function.  Thus $\sigma_k$ is the
ordinary divisor transform of $I_k$, while $J_k$ is its M\"obius transform.

At a prime $p$, these related functions satisfy
\begin{equation}
  \Dinv{\sigma_k}(p)=-(1+p^k),
  \qquad J_k(p)=p^k-1,
  \qquad \mu(p)=-1.
\end{equation}
The first two values sum to $-2$, independently of $p$ and $k$.  Define
\begin{equation}\label{eq:H-definition}
  H_k(n):=\Dinv{\sigma_k}(n)+J_k(n)+2.
\end{equation}
Throughout, $\N=\{1,2,3,\ldots\}$.  The first result shows that this
prime-level cancellation has no composite solutions.

\begin{theorem}\label{thm:two-function}
For all $k,n\in\N$,
\begin{equation}\label{eq:two-function-criterion}
  \Dinv{\sigma_k}(n)+J_k(n)+2=0
  \quad\Longleftrightarrow\quad
  n\text{ is prime}.
\end{equation}
In particular, with $\sigma=\sigma_1$ and $J_1=\varphi$,
\begin{equation}\label{eq:two-function-classical}
  \Dinv{\sigma}(n)+\varphi(n)+2=0
  \quad\Longleftrightarrow\quad
  n\text{ is prime}.
\end{equation}
\end{theorem}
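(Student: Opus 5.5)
Since $\sigma_k=\one*I_k$ with both factors multiplicative, its Dirichlet inverse is $\Dinv{\sigma_k}=\mu*(\mu I_k)$, because $\mu$ inverts $\one$ and $\mu I_k$ inverts the completely multiplicative $I_k$. Evaluating at a prime power $p^a$, only divisors $1,p$ contribute to each factor, which gives
\[
\Dinv{\sigma_k}(1)=1,\quad \Dinv{\sigma_k}(p)=-(1+p^k),\quad \Dinv{\sigma_k}(p^2)=p^k,\quad \Dinv{\sigma_k}(p^a)=0\ (a\ge3).
\]
So $f:=\Dinv{\sigma_k}$ is multiplicative and supported on cube-free $n$. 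For $n=\prod p_i^{a_i}$ cube-free, $f(n)=\prod_{a_i=1}(-(1+p_i^k))\prod_{a_i=2}p_i^k$. Also $J_k(n)=n^k\prod_{p\mid n}(1-p^{-k})>0$ for all $n$.

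The proof is a case split. For $n=1$ we get $H_k(1)=1+1+2=4\ne0$. For $n$ prime, $H_k(p)=-(1+p^k)+(p^k-1)+2=0$, which settles the forward direction. It remains to show $H_k(n)\neq0$ for every composite $n$, i.e. $f(n)\ne -J_k(n)-2$. The sign of $f(n)$ is $(-1)^{\omega_1}$, where $\omega_1$ is the number of primes dividing $n$ exactly once.
\begin{itemize}
\item If $n$ is not cube-free, then $f(n)=0$ and $H_k(n)=J_k(n)+2>0$.
\item If $f(n)>0$, then $H_k(n)>0$ trivially.
\item The remaining case is $f(n)<0$, i.e. $n$ cube-free with $\omega_1$ odd. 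We need $|f(n)|\ne J_k(n)+2$.
\end{itemize}

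In that last case I compare magnitudes via the local factors. At $a=1$ the factor of $|f|$ is $p^k+1$ against $p^k-1$ for $J_k$; at $a=2$ it is $p^k$ against $p^{2k}-p^k$. If $n$ is squarefree with $r\ge3$ prime factors ($r$ odd), then $\prod(p_i^k+1)-\prod(p_i^k-1)$ is at least the sum of the degree-$(r-1)$ elementary terms times $2$, which exceeds $2$. So $|f(n)|>J_k(n)+2$.

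The main obstacle is the mixed cube-free, non-squarefree case with $\omega_1$ odd. Here the square factors push $J_k$ well above $|f|$, since $p^{2k}-p^k\ge p^k$ with equality only when $p^k=2$. The abstract hints that $n=18$, $k=1$ is delicate, so I plan to use a divisibility argument rather than pure size. Let $q^2\,\|\, n$. Then $q^k$ divides $f(n)$, and $q^k(q^k-1)$ divides $J_k(n)$, so $q^k\mid J_k(n)$ as well. The equation $f(n)=-J_k(n)-2$ would then force $q^k\mid 2$, hence $q=2$ and $k=1$, and no other prime appears squared. Once $q=2,k=1$ is forced, $n=4m$ with $m$ squarefree odd and $\omega(m)$ odd, so $|f(n)|=2\prod_{p\mid m}(p+1)$ and $J_1(n)=2\varphi(m)$. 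The equation reads $\prod(p+1)=\prod(p-1)+1$. The left minus the right product is even and at least $2$, so there is no solution.

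Hence $H_k(n)\ne0$ for composite $n$. The specialization \eqref{eq:two-function-classical} follows by taking $k=1$, where $J_1=\varphi$.
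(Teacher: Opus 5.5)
Your proof is correct and follows essentially the paper's route. Both use the vanishing of $\Dinv{\sigma_k}$ off the cube-free integers, positivity plus the difference-of-products bound in the squarefree case, and in the cube-free nonsquarefree case a divisibility argument forcing $k=1$ with only $2$ squared (your $q^k$ for each squared prime, the paper's $b^k$ with $n=ab^2$), followed by the same parity contradiction.

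Your preliminary split by the sign of $\Dinv{\sigma_k}(n)$ just moves the positive-sign cases, including the paper's $a=1$ check, into the positivity step. Your heuristic aside that square factors push $J_k$ above $|\Dinv{\sigma_k}|$ fails in general (e.g.\ $n=12$, $k=1$), but you rightly do not rely on it.
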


Adding $\mu(p)=-1$ makes the three prime-level values sum to $-3$.
This motivates the second function
\begin{equation}\label{eq:F-definition}
  F_k(n):=\Dinv{\sigma_k}(n)+\mu(n)+J_k(n)+3.
\end{equation}
Unlike $H_k$, this function has one composite zero at $k=1$.

\begin{theorem}\label{thm:main}
Let $k,n\in\N$.  Then
\begin{equation}
  F_k(n)=0
  \quad\Longleftrightarrow\quad
  \begin{cases}
    n\text{ is prime or }n=18, & k=1,\\
    n\text{ is prime}, & k\geq2.
  \end{cases}
\end{equation}
\end{theorem}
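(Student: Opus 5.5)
The plan is to reduce everything to an explicit multiplicative formula for $\Dinv{\sigma_k}$ and then split $n$ by its prime-exponent pattern. Since $\sigma_k=\one*I_k$ and $I_k$ is completely multiplicative, $\Dinv{\sigma_k}=\mu*(\mu I_k)$. This gives the prime-power values $\Dinv{\sigma_k}(p)=-(1+p^k)$, $\Dinv{\sigma_k}(p^2)=p^k$ and $\Dinv{\sigma_k}(p^a)=0$ for $a\ge3$; these are the values already used for Theorem~\ref{thm:two-function}. Hence $\Dinv{\sigma_k}(n)$ vanishes unless $n$ is cube-free. For $n=AB^2$ with $A,B$ coprime and squarefree,
\[
\Dinv{\sigma_k}(n)=(-1)^{\omega(A)}\prod_{p\mid A}(1+p^k)\cdot B^k,
\qquad
J_k(n)=B^k\prod_{p\mid AB}(p^k-1).
\]
The cases $n=1$ (where $F_k(1)=6$) and $n$ prime (where $F_k(p)=0$ by the computation in the introduction) are immediate. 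So it remains to classify composite zeros.

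\emph{Case 1: $n$ is not cube-free.} Then $\Dinv{\sigma_k}(n)=\mu(n)=0$, so $F_k(n)=J_k(n)+3>0$.

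\emph{Case 2: $n$ is squarefree with $r=\omega(n)\ge2$.} If $r$ is even, every term of $F_k(n)$ is positive. If $r$ is odd (so $r\ge3$),
\[
F_k(n)=2+\prod(p^k-1)-\prod(p^k+1).
\]
Expanding, $\prod(p^k+1)-\prod(p^k-1)$ is twice the sum of the odd-degree elementary symmetric terms in the $p^k$. This is at least $2\sum p^k>2$, so $F_k(n)<0$.

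\emph{Case 3: $n=AB^2$ with $B>1$ (cube-free but not squarefree).} Here $\mu(n)=0$. If $\omega(A)$ is even, all terms are positive. If $\omega(A)$ is odd,
\[
F_k(n)=B^k\Bigl[\prod_{p\mid AB}(p^k-1)-\prod_{p\mid A}(p^k+1)\Bigr]+3.
\]
A zero forces $B^k\mid 3$, hence $k=1$ and $B=3$; this already settles $k\ge2$. For $k=1$ and $B=3$, the factor from $p=3$ is $p-1=2$, so the condition becomes
\[
\prod_{p\mid A}(p+1)-2\prod_{p\mid A}(p-1)=1,
\qquad 3\nmid A,\quad \omega(A)\text{ odd}.
\]
I expect this Diophantine condition to be the only delicate step, and I would settle it by parity.
\begin{itemize}
\item If $A$ is odd, both products are even, so the left side is even and cannot equal $1$.
\item If $2\mid A$ and $\omega(A)\ge3$, write $A=2A'$. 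The condition becomes $3\prod_{q\mid A'}(q+1)-2\prod_{q\mid A'}(q-1)=1$. Since $A'$ has at least two odd prime factors, both products are even, so again the left side is even.
\item If $A=2$, we get $3-2=1$, which holds and yields $n=2\cdot 9=18$.
\end{itemize}
Finally, a direct check gives $F_1(18)=(-3)(3)+0+6+3=0$. This confirms the exceptional zero and completes the classification.
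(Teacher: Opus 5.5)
Your proof follows the paper's route: the same split into cube-divisible, squarefree, and $AB^2$ cases, the same sign arguments, and the same divisibility step forcing $B=3$, $k=1$ and reducing to $A=2$. You make one pass over all $k$, with $B^k\mid 3$ disposing of $k\ge2$ immediately, and you use parity for $A=2A'$. The paper instead handles $k\ge2$ separately by a sign/integrality argument ($b^kT\ge0$ or $b^kT\le-4$) and uses the size bound $3\sigma_1(c)-2\varphi(c)\ge c+5$. Your version is slightly more economical, but the ideas are the same.

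One step is misstated and needs correcting. In Case 2, for odd $r$ the expansion gives
\[
\prod_{j=1}^r(q_j+1)-\prod_{j=1}^r(q_j-1)=\sum_{m=0}^{r}e_m\bigl(1-(-1)^{r-m}\bigr)=2\,(e_0+e_2+\cdots+e_{r-1}),
\]
where $e_m$ is the $m$th elementary symmetric polynomial in $q_j=p_j^k$. So only the \emph{even}-degree terms survive, and $e_1=\sum p^k$ does not appear at all. Your "odd-degree" description holds only for even $r$. Your conclusion is still right, and the fix is immediate: for $r\ge3$ the sum contains $e_0=1$ and $e_2>0$, so the difference is at least $2+2e_2>2$, hence $F_k(n)<0$. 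The bound $2\sum p^k$ you state does hold, since $q_iq_j\ge q_i+q_j$ gives $e_2\ge(r-1)e_1$, but not for the reason you gave.
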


The exceptional value $18$ is the only composite that survives a
divisibility condition by $3$; that condition becomes impossible for
$k\geq2$.  Before proving the two classifications, we place them among
classical prime criteria and the literature on inverse arithmetic functions.

\section{Related work and contribution}

Several classical arithmetic functions characterize primes by their values
at a single integer.  For $n>1$, one has
\begin{equation}
  \varphi(n)=n-1\quad\Longleftrightarrow\quad n\text{ is prime},
\end{equation}
while Lehmer's celebrated problem asks whether the stronger divisibility
$\varphi(n)\mid n-1$ can hold for a composite $n$ \cite{lehmer}.
The Jordan analogue is $J_k(n)=n^k-1$ at primes.  Subbarao and Siva Rama
Prasad studied the corresponding divisibility condition
$J_k(n)\mid n^k-1$ and related analogues of Lehmer's totient problem
\cite{subbarao-prasad}.  Likewise,
$\sigma_k(n)=n^k+1$ characterizes primes because a composite integer has a
proper divisor other than $1$.

These familiar criteria use the values of $\sigma_k$ or $J_k$ themselves.
The present work instead uses the inverse of $\sigma_k$ in the ring of
arithmetic functions.  Cashwell and Everett developed the ring-theoretic
setting for arithmetic functions under Dirichlet convolution
\cite{cashwell}; McCarthy gives a systematic account of multiplicative
functions and convolution \cite{mccarthy}.  General
explicit expressions for Dirichlet inverses were studied by Haukkanen
\cite{haukkanen}.  Brown gave an expository treatment of inverse arithmetic
functions and, for the ordinary divisor sum, recorded
\begin{equation}
  \Dinv{\sigma_1}(p)=-(p+1),\qquad
  \Dinv{\sigma_1}(p^2)=p,\qquad
  \Dinv{\sigma_1}(p^a)=0\quad(a\geq3)
\end{equation}
\cite{brown}; the same sequence is catalogued as OEIS A046692
\cite{oeis}.  The corresponding generalized formulas also occur in analytic
work on $L$-functions \cite[eqs.~(4.7)--(4.8)]{zacharias}, and follow
immediately from $\sigma_k=\one*I_k$.

The inverse formulas just cited are established results.  The
two-function criterion in Theorem~\ref{thm:two-function} is an elementary
application of those formulas and provides a comparison for the
three-function equation.  Our focus is the exact zero set of
\eqref{eq:F-definition}, which adds the M\"obius value and raises the
constant from $2$ to $3$.  We did not find this mixed zero-set
classification in the cited treatments of inverse arithmetic functions;
this observation is limited to those sources and does not establish
priority over all earlier work.  The companion $n^k$ criterion below is
also a direct consequence of the known local values and is included to
clarify the role of intermediate divisors.

The criteria are structural rather than algorithmic.  Direct evaluation
of the multiplicative formulas generally presupposes a prime factorization,
so they are not proposed as faster primality tests.  Their significance is
that they link divisor summation, M\"obius transformation, and inversion in
the Dirichlet ring.  We first derive the constituent functions from their
Lambert-series identities and organize their local values by prime-exponent
pattern.  We then prove Theorem~\ref{thm:two-function} before
Theorem~\ref{thm:main}, followed by the $n^k$ criterion and the exact
intermediate-divisor relation between the three-function and $n^k$
criteria.

\section{Lambert-series derivation}\label{sec:generating}

For arithmetic functions $f$ and $g$, their Dirichlet convolution is
\begin{equation}\label{eq:dirichlet-convolution-definition}
  (f*g)(n)=\sum_{d\mid n}f(d)g(n/d).
\end{equation}
Its identity is the function $\eps$, defined by $\eps(1)=1$ and
$\eps(n)=0$ for $n>1$.  If $f(1)\neq0$, its Dirichlet inverse $\Dinv f$ is
determined by $f*\Dinv f=\eps$.  Throughout this article, $\Dinv f$ denotes
this convolution inverse, not a pointwise reciprocal.  Standard background
may be found in \cite[Chapter~2]{apostol} and \cite[Chapter~27]{dlmf}.

All identities in this section are formal power-series identities, so every
coefficient comparison is finite and requires no analytic convergence
assumption.  For any arithmetic function $a$,
\begin{equation}\label{eq:general-lambert}
  \sum_{r=1}^{\infty}\sum_{m=1}^{\infty}a(r)x^{rm}
  =\sum_{n=1}^{\infty}\left(\sum_{d\mid n}a(d)\right)x^n.
\end{equation}
Indeed, the coefficient of $x^n$ on the left comes from precisely the
pairs $(r,m)$ for which $rm=n$, equivalently from the positive divisors
$r$ of $n$.

Taking $a=\mu$ in \eqref{eq:general-lambert} gives the first generating
identity,
\begin{equation}\label{eq:mobius-generating}
  \sum_{r=1}^{\infty}\sum_{m=1}^{\infty}\mu(r)x^{rm}=x.
\end{equation}
Coefficient comparison yields
\begin{equation}\label{eq:mobius-divisor}
  \sum_{d\mid n}\mu(d)=\eps(n),
  \qquad
  \eps(n)=
  \begin{cases}
    1,&n=1,\\
    0,&n>1.
  \end{cases}
\end{equation}
In particular, the coefficients are determined recursively by
\begin{equation}\label{eq:mobius-recurrence}
  \mu(1)=1,
  \qquad
  \mu(n)=-\sum_{\substack{d\mid n\\d<n}}\mu(d)
  \quad(n>1).
\end{equation}
Multiplication by $n^k$ gives the arithmetic function
$(\mu I_k)(n)=n^k\mu(n)$ that appears below.

Taking $a=J_k$ in \eqref{eq:general-lambert} gives the second generating
identity,
\begin{equation}\label{eq:jordan-generating}
  \sum_{r=1}^{\infty}\sum_{m=1}^{\infty}J_k(r)x^{rm}
  =\sum_{n=1}^{\infty}n^k x^n.
\end{equation}
Its coefficient identity is
\begin{equation}\label{eq:jordan-divisor}
  \sum_{d\mid n}J_k(d)=n^k,
  \qquad\text{or equivalently}\qquad
  \one*J_k=I_k.
\end{equation}
Convolving with $\mu=\Dinv\one$ gives the standard formula
\begin{equation}\label{eq:jordan-convolution}
  J_k=\mu*I_k.
\end{equation}
For $k=1$, this is $J_1=\varphi$.

The third generating identity is
\begin{equation}\label{eq:inverse-generating}
  \sum_{r=1}^{\infty}\sum_{m=1}^{\infty}
       \Dinv{\sigma_k}(r)x^{rm}
  =\sum_{n=1}^{\infty}n^k\mu(n)x^n.
\end{equation}
Coefficient comparison gives
\begin{equation}\label{eq:inverse-divisor-generating}
  \sum_{d\mid n}\Dinv{\sigma_k}(d)=n^k\mu(n),
  \qquad\text{that is,}\qquad
  \one*\Dinv{\sigma_k}=\mu I_k.
\end{equation}
Convolving with $\mu$ yields
\begin{equation}\label{eq:inverse-identification}
  \Dinv{\sigma_k}=\mu*(\mu I_k).
\end{equation}
Complete multiplicativity of $I_k$ gives
\begin{equation}\label{eq:Ik-inverse-verification}
  \begin{aligned}
    \bigl(I_k*(\mu I_k)\bigr)(n)
      &=\sum_{d\mid n}d^k\mu(n/d)(n/d)^k\\
      &=n^k\sum_{d\mid n}\mu(n/d)
       =n^k\eps(n)=\eps(n).
  \end{aligned}
\end{equation}
This is indeed the Dirichlet inverse of
$\sigma_k=\one*I_k$, because
\begin{equation}\label{eq:inverse-verification}
  (\one*I_k)*\bigl(\mu*(\mu I_k)\bigr)
  =(\one*\mu)*(I_k*\mu I_k)
  =\eps*\eps=\eps.
\end{equation}
The coefficients may equivalently be computed by the recurrence
\begin{equation}\label{eq:inverse-recurrence}
  \Dinv{\sigma_k}(1)=1,
  \qquad
  \Dinv{\sigma_k}(n)
  =n^k\mu(n)
   -\sum_{\substack{d\mid n\\d<n}}\Dinv{\sigma_k}(d)
  \quad(n>1).
\end{equation}
Thus the three Lambert series lead directly to the standard functions
$\mu$, $J_k$, and $\Dinv{\sigma_k}$ used in the two prime
characterizations.

\section{Local values and factorization cases}

We now extract the local values needed in the proof.  Expanding the
convolution in \eqref{eq:inverse-identification} gives
\begin{equation}\label{eq:inverse-divisor-sum}
  \Dinv{\sigma_k}(n)
  =\sum_{d\mid n}\mu(d)\mu(n/d)(n/d)^k
  =\sum_{d\mid n}d^k\mu(d)\mu(n/d),
\end{equation}
where the second sum follows from the first by replacing $d$ with the
complementary divisor $n/d$.

The prime-power values follow directly from the divisor identity.  Let $p$
be prime and set $q=p^k$.  From
\eqref{eq:inverse-divisor-generating},
\begin{equation}\label{eq:prime-power-partial-sum}
  \sum_{j=0}^{a}\Dinv{\sigma_k}(p^j)=q^a\mu(p^a).
\end{equation}
For $a\geq1$, subtracting the same identity with $a-1$ in place of $a$
gives
\begin{equation}\label{eq:local-subtraction}
  \Dinv{\sigma_k}(p^a)
  =q^a\mu(p^a)-q^{a-1}\mu(p^{a-1}).
\end{equation}
Since $\mu(p)=-1$ and $\mu(p^a)=0$ for $a\geq2$, this yields
\begin{equation}\label{eq:local-inverse}
  \Dinv{\sigma_k}(p^a)=
  \begin{cases}
    1,&a=0,\\
    -(q+1),&a=1,\\
    q,&a=2,\\
    0,&a\geq3.
  \end{cases}
\end{equation}
Explicitly,
$\Dinv{\sigma_k}(p)=q(-1)-1=-(q+1)$,
$\Dinv{\sigma_k}(p^2)=0-q(-1)=q$, and
$\Dinv{\sigma_k}(p^a)=0$ for $a\geq3$.
The Jordan totient has the familiar formulas
\begin{equation}\label{eq:jordan}
  J_k(n)=\sum_{d\mid n}\mu(n/d)d^k
  =n^k\prod_{p\mid n}(1-p^{-k}),
\end{equation}
so that
\begin{equation}\label{eq:local-jordan}
  J_k(p^a)=q^{a-1}(q-1),\qquad a\geq1.
\end{equation}

Equations \eqref{eq:inverse-identification} and
\eqref{eq:jordan-convolution} show that both $\Dinv{\sigma_k}$ and $J_k$
are multiplicative.  We now record the three factorization cases explicitly
so that the subsequent proof can be read case by case.

\subsection{The three prime-exponent patterns}

\begin{proposition}\label{prop:factorization-cases}
Let \(n>1\).  Exactly one of the following three cases occurs.

\smallskip
\noindent\emph{Case I: The integer \(n\) is squarefree.}  Write
\begin{equation}
  n=\prod_{j=1}^r p_j,
  \qquad q_j=p_j^k,
\end{equation}
with distinct primes \(p_j\).  Then
\begin{equation}\label{eq:case-squarefree}
  \Dinv{\sigma_k}(n)=(-1)^r\prod_{j=1}^r(q_j+1),
  \qquad
  \mu(n)=(-1)^r,
  \qquad
  J_k(n)=\prod_{j=1}^r(q_j-1).
\end{equation}

\smallskip
\noindent\emph{Case II: A prime cube divides \(n\).}  If \(p^3\mid n\)
for some prime \(p\), then
\begin{equation}\label{eq:case-cube}
  \Dinv{\sigma_k}(n)=0,
  \qquad
  \mu(n)=0,
  \qquad
  J_k(n)>0.
\end{equation}

\smallskip
\noindent\emph{Case III: The integer \(n\) is cube-free but not squarefree.}
There are unique squarefree integers \(a,b\) such that
\begin{equation}\label{eq:ab2-factorization}
  n=ab^2,
  \qquad
  \gcd(a,b)=1,
  \qquad
  b>1.
\end{equation}
The primes dividing \(a\) occur to the first power in \(n\), whereas those
dividing \(b\) occur to the second power.  Consequently,
\begin{equation}\label{eq:case-ab2}
  \Dinv{\sigma_k}(ab^2)=\mu(a)b^k\sigma_k(a),
  \qquad
  \mu(ab^2)=0,
  \qquad
  J_k(ab^2)=b^kJ_k(a)J_k(b).
\end{equation}
\end{proposition}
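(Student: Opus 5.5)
The plan is to derive everything from multiplicativity together with the local values \eqref{eq:local-inverse} and \eqref{eq:local-jordan}, plus the standard prime-power values of $\mu$. First, the trichotomy. For $n>1$ write $n=\prod p^{e_p}$ and let $e=\max e_p\geq1$. Then $e=1$ gives Case I, $e\geq3$ gives Case II, and $e=2$ gives Case III. These three conditions are mutually exclusive and exhaust all $n>1$, so exactly one case occurs.

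\emph{Case I.} Since $\Dinv{\sigma_k}$, $\mu$ and $J_k$ are multiplicative (by \eqref{eq:inverse-identification} and \eqref{eq:jordan-convolution}), each value at $n=p_1\cdots p_r$ is the product of the values at the $p_j$. Substituting $\Dinv{\sigma_k}(p_j)=-(q_j+1)$, $\mu(p_j)=-1$ and $J_k(p_j)=q_j-1$ gives \eqref{eq:case-squarefree} at once.

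\emph{Case II.} If $p^3\mid n$, write $n=p^am$ with $a\geq3$ and $p\nmid m$. Then $\Dinv{\sigma_k}(n)=\Dinv{\sigma_k}(p^a)\Dinv{\sigma_k}(m)=0$ by the $a\geq3$ line of \eqref{eq:local-inverse}, and $\mu(n)=0$ because $n$ is not squarefree. For positivity of $J_k(n)$, use the product formula \eqref{eq:jordan}: every factor $1-p^{-k}$ is positive, so $J_k(n)>0$.

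\emph{Case III.} Set $a=\prod_{e_p=1}p$ and $b=\prod_{e_p=2}p$. Then $n=ab^2$, both $a$ and $b$ are squarefree, $\gcd(a,b)=1$, and $b>1$ because $n$ is not squarefree. For uniqueness, note that in any such factorization the exponent of a prime in $ab^2$ is $1$ exactly when it divides $a$ and $2$ exactly when it divides $b$, so $a$ and $b$ are forced by the exponent pattern. The formulas in \eqref{eq:case-ab2} then follow from multiplicativity over the coprime factors:
\begin{itemize}
\item $\Dinv{\sigma_k}(ab^2)=\Dinv{\sigma_k}(a)\prod_{p\mid b}p^k$. By the Case I formula, $\Dinv{\sigma_k}(a)=\mu(a)\prod_{p\mid a}(p^k+1)=\mu(a)\sigma_k(a)$, using $\sigma_k(a)=\prod_{p\mid a}(1+p^k)$ for squarefree $a$. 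Also $\prod_{p\mid b}p^k=b^k$, giving $\mu(a)b^k\sigma_k(a)$.
\item $\mu(ab^2)=0$, since $n$ is not squarefree.
\item $J_k(ab^2)=J_k(a)\prod_{p\mid b}p^k(p^k-1)=J_k(a)\,b^kJ_k(b)$, using \eqref{eq:local-jordan} with exponent $2$ and multiplicativity of $J_k$ on the squarefree $b$.
\end{itemize}

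No step is a genuine obstacle. The only points needing care are the uniqueness claim in Case III and the identification $\prod_{p\mid a}(1+p^k)=\sigma_k(a)$. The edge case $a=1$ is harmless, since then $\mu(1)=\sigma_k(1)=J_k(1)=1$.
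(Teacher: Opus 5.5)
Your proposal is correct and follows essentially the same route as the paper: you separate the cases by the largest prime exponent, then multiply the local values of $\Dinv{\sigma_k}$, $\mu$ and $J_k$ over the prime-power factors, putting the exponent-one primes into $a$ and the exponent-two primes into $b$. Your explicit uniqueness argument for $(a,b)$ and your remark on the edge case $a=1$ spell out details that the paper states without elaboration.
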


\begin{proof}
The three cases exhaust the possible prime exponents.  In Case I,
\eqref{eq:case-squarefree} follows by multiplying the exponent-one values in
\eqref{eq:local-inverse} and \eqref{eq:local-jordan}.  In Case II, a local
factor \(\Dinv{\sigma_k}(p^a)=0\) with \(a\geq3\) makes the full
multiplicative product zero, while the repeated prime factor also gives
\(\mu(n)=0\).  The product formula in \eqref{eq:jordan} gives
\(J_k(n)>0\).

For Case III, place the primes of exponent one into \(a\) and those of
exponent two into \(b\).  The resulting \(a,b\) are squarefree, coprime, and
unique.  Multiplying the local values gives
\begin{equation}
  \Dinv{\sigma_k}(ab^2)
  =\prod_{p\mid a}\bigl(-(1+p^k)\bigr)
     \prod_{p\mid b}p^k
  =\mu(a)b^k\sigma_k(a),
\end{equation}
and
\begin{equation}
  J_k(ab^2)
  =\prod_{p\mid a}(p^k-1)
     \prod_{p\mid b}p^k(p^k-1)
  =b^kJ_k(a)J_k(b).
\end{equation}
Since $b>1$, the integer $ab^2$ has a repeated prime factor, so
$\mu(ab^2)=0$.  This proves \eqref{eq:case-ab2} and completes the proof of
the proposition.
\end{proof}

\smallskip
\noindent\textit{Proof organization.}
Theorem~\ref{thm:two-function} is proved first using the three
factorization cases.  For Theorem~\ref{thm:main}, Part I treats $k=1$,
including the
odd/even subcases for the squarefree factor $a$ and, when $a$ is even, the
possibility that its remaining odd factor equals $1$.  After that
classification is complete, Part II lists the value of $F_k(n)$ in each of
the same three cases for $k\geq2$ and excludes every composite possibility.

\section{The two-function criterion}

\begin{proof}[Proof of Theorem~\ref{thm:two-function}]
At $n=1$, the left side of \eqref{eq:two-function-criterion} is $4$.
For a prime $p$, \eqref{eq:local-inverse} and
\eqref{eq:local-jordan} give
$\Dinv{\sigma_k}(p)+J_k(p)+2=-(1+p^k)+(p^k-1)+2=0$.

Now let $n$ be composite.  If a prime cube divides $n$, then
$\Dinv{\sigma_k}(n)=0$ and $J_k(n)>0$ by \eqref{eq:case-cube}, so the
left side is positive.  If $n$ is squarefree, write
$n=\prod_{j=1}^r p_j$, where $r\geq2$, and set $q_j=p_j^k$.
For even $r$, both terms $\Dinv{\sigma_k}(n)$ and $J_k(n)$ are positive.
For odd $r$, we have $r\geq3$, and the difference-of-products estimate
gives
\begin{equation}\label{eq:two-function-squarefree-bound}
  \prod_{j=1}^r(q_j+1)-\prod_{j=1}^r(q_j-1)
  \geq 2\prod_{j=2}^r q_j>2.
\end{equation}
Thus the left side of \eqref{eq:two-function-criterion} is negative,
not zero.

It remains to consider a cube-free, nonsquarefree integer.  Write
$n=ab^2$ as in \eqref{eq:ab2-factorization}.  By \eqref{eq:case-ab2},
\begin{equation}\label{eq:two-function-ab2}
  \Dinv{\sigma_k}(n)+J_k(n)+2
  =b^k\bigl(\mu(a)\sigma_k(a)+J_k(a)J_k(b)\bigr)+2.
\end{equation}
If $k\geq2$, then $b^k\geq4$, so the integer on the right of
\eqref{eq:two-function-ab2} cannot be zero.  If $k=1$, a zero would
require $b\mid2$, hence $b=2$ and $a$ is odd.  Since $J_1(2)=1$,
\eqref{eq:two-function-ab2} would then require
\begin{equation}\label{eq:two-function-odd-a}
  \mu(a)\sigma_1(a)+\varphi(a)=-1.
\end{equation}
For $a=1$, the left side is $2$.  For odd $a>1$, both
$\sigma_1(a)=\prod_{p\mid a}(p+1)$ and
$\varphi(a)=\prod_{p\mid a}(p-1)$ are even, so the left side is even.
Either way, \eqref{eq:two-function-odd-a} is impossible.  No composite
integer satisfies \eqref{eq:two-function-criterion}.
\end{proof}

\section{The three-function zero set}

\begin{proof}[Proof of Theorem~\ref{thm:main}]
At $n=1$,
\begin{equation}\label{eq:F-at-one}
  F_k(1)=1+1+1+3=6.
\end{equation}
For every prime $p$ and every $k\geq1$,
\begin{equation}\label{eq:F-at-prime}
  F_k(p)=-(1+p^k)-1+(p^k-1)+3=0.
\end{equation}
It remains to classify the composite zeros.  We first prove the statement
for $k=1$ and then treat $k\geq2$ separately.

\medskip
\noindent\emph{Part I: The case $k=1$.}
Assume that $F_1(n)=0$.  By \eqref{eq:F-at-one}, we have $n>1$.

\smallskip
\noindent\emph{Case I: The integer $n$ is squarefree.}
For squarefree $n$,
\begin{equation}\label{eq:k-one-squarefree-components}
  \Dinv{\sigma_1}(n)=\mu(n)\sigma_1(n),
  \qquad
  J_1(n)=\varphi(n),
\end{equation}
Substituting \eqref{eq:k-one-squarefree-components} into
\eqref{eq:F-definition} gives
\begin{equation}\label{eq:F-one-squarefree}
  F_1(n)=\mu(n)\bigl(\sigma_1(n)+1\bigr)+\varphi(n)+3.
\end{equation}

\noindent\emph{Subcase I(a): $\mu(n)=1$.}
Equation \eqref{eq:F-one-squarefree} gives
\begin{equation}
  F_1(n)=\sigma_1(n)+\varphi(n)+4>0.
\end{equation}

\smallskip
\noindent\emph{Subcase I(b): $\mu(n)=-1$.}
Now $F_1(n)=0$ is equivalent to
\begin{equation}\label{eq:k-one-squarefree}
  \sigma_1(n)-\varphi(n)=2.
\end{equation}
If $n$ has one prime factor, then $n$ is prime.  If $n$ is composite, it has
a divisor $d$ with $1<d<n$, and
\begin{equation}
  \sigma_1(n)-\varphi(n)
  \geq(1+d+n)-(n-1)=d+2>2,
\end{equation}
contradicting \eqref{eq:k-one-squarefree}.  Thus the squarefree solutions are
exactly the primes.

\smallskip
\noindent\emph{Case II: A prime cube divides $n$.}
If $p^3\mid n$ for some prime $p$, then
\begin{equation}\label{eq:k-one-cube-components}
  \Dinv{\sigma_1}(n)=\mu(n)=0,
\end{equation}
so \eqref{eq:k-one-cube-components} and \eqref{eq:F-definition} give
\begin{equation}
  F_1(n)=\varphi(n)+3>0.
\end{equation}
This case contains no solutions.

\smallskip
\noindent\emph{Case III: The integer $n$ is cube-free but not squarefree.}
Write
\begin{equation}\label{eq:k-one-ab2-factorization}
  n=ab^2,
  \qquad
  a,b\text{ squarefree},
  \qquad
  \gcd(a,b)=1,
  \qquad
  b>1.
\end{equation}
For the factorization \eqref{eq:k-one-ab2-factorization},
equation \eqref{eq:case-ab2} with $k=1$ gives
\begin{equation}\label{eq:k-one-ab2-components}
  \Dinv{\sigma_1}(ab^2)=\mu(a)b\sigma_1(a),
  \qquad
  \varphi(ab^2)=b\varphi(a)\varphi(b).
\end{equation}
Together with $\mu(ab^2)=0$, equation
\eqref{eq:k-one-ab2-components} gives
\begin{equation}\label{eq:k-one-b-divisibility}
  \begin{aligned}
    F_1(n)=0
    &\Longleftrightarrow
    b\bigl(\mu(a)\sigma_1(a)+\varphi(a)\varphi(b)\bigr)+3=0\\
    &\Longleftrightarrow
    b\bigl(\mu(a)\sigma_1(a)+\varphi(a)\varphi(b)\bigr)=-3.
  \end{aligned}
\end{equation}
Therefore $b\mid3$.  Because $b>1$, we have $b=3$, and
\eqref{eq:k-one-b-divisibility} becomes
\begin{equation}\label{eq:k-one-a}
  \mu(a)\sigma_1(a)+2\varphi(a)=-1.
\end{equation}
Positivity forces $\mu(a)=-1$, and hence $a>1$.

\noindent\emph{Subcase III(a): $a$ is odd.}
Then
\begin{equation}\label{eq:k-one-odd-sigma}
  \sigma_1(a)=\prod_{p\mid a}(p+1).
\end{equation}
By \eqref{eq:k-one-odd-sigma}, the left side of \eqref{eq:k-one-a} is even,
whereas its right
side is odd.  This is impossible.

\smallskip
\noindent\emph{Subcase III(b): $a$ is even.}
Write $a=2c$, where $c$ is odd and squarefree.  Since
\begin{equation}\label{eq:k-one-even-components}
  \sigma_1(a)=3\sigma_1(c),
  \qquad
  \varphi(a)=\varphi(c),
\end{equation}
substitution of \eqref{eq:k-one-even-components} into
\eqref{eq:k-one-a} reduces the equation to
\begin{equation}\label{eq:k-one-c}
  3\sigma_1(c)-2\varphi(c)=1.
\end{equation}

\noindent\emph{Subcase III(b1): $c>1$.}
Here
\begin{equation}
  3\sigma_1(c)-2\varphi(c)
  \geq3(c+1)-2(c-1)=c+5>1,
\end{equation}
contradicting \eqref{eq:k-one-c}.

\smallskip
\noindent\emph{Subcase III(b2): $c=1$.}
Then $a=2$ and
\begin{equation}\label{eq:k-one-exception}
  n=ab^2=2\cdot3^2=18.
\end{equation}
Moreover,
\begin{equation}\label{eq:k-one-exception-values}
  \begin{aligned}
    \Dinv{\sigma_1}(18)
      &=\Dinv{\sigma_1}(2)\Dinv{\sigma_1}(3^2)=(-3)(3)=-9,\\
    \mu(18)&=0,\qquad \varphi(18)=6,
  \end{aligned}
\end{equation}
so $F_1(18)=-9+0+6+3=0$.  This completes the classification for $k=1$:
the zeros are exactly the primes and $18$.

\medskip
\noindent\emph{Part II: The case $k\geq2$.}
Assume that $k\geq2$ and that a composite integer $n$ satisfies
$F_k(n)=0$.  We write the three components of $F_k(n)$ explicitly in each
factorization case.

\smallskip
\noindent\emph{Case I: The integer $n$ is squarefree.}
Write
\begin{equation}\label{eq:higher-squarefree-factorization}
  n=\prod_{j=1}^r p_j,
  \qquad q_j=p_j^k,
  \qquad r\geq2,
\end{equation}
where the primes $p_j$ are distinct.  Then
\begin{equation}\label{eq:higher-squarefree-components}
  \begin{aligned}
    \Dinv{\sigma_k}(n)&=(-1)^r\prod_{j=1}^r(q_j+1),\\
    \mu(n)&=(-1)^r,\\
    J_k(n)&=\prod_{j=1}^r(q_j-1).
  \end{aligned}
\end{equation}
Substituting \eqref{eq:higher-squarefree-components} into
\eqref{eq:F-definition} gives
\begin{equation}\label{eq:Fk-squarefree-form}
  F_k(n)
  =(-1)^r\left(\prod_{j=1}^r(q_j+1)+1\right)
    +\prod_{j=1}^r(q_j-1)+3.
\end{equation}
If $r$ is even, every term in \eqref{eq:Fk-squarefree-form} is positive.
If $r$ is odd and $n$ is composite, then $r\geq3$.  The equation
$F_k(n)=0$ would require
\begin{equation}\label{eq:higher-squarefree-equation}
  \prod_{j=1}^r(q_j+1)-\prod_{j=1}^r(q_j-1)=2.
\end{equation}
However,
\begin{align}
  \prod_{j=1}^r(q_j+1)-\prod_{j=1}^r(q_j-1)
  &={\left(\prod_{j=1}^r(q_j+1)-\prod_{j=1}^r q_j\right)}
    +{\left(\prod_{j=1}^r q_j-\prod_{j=1}^r(q_j-1)\right)}
    \label{eq:higher-product-decomposition}\\
  &\geq 2\prod_{j=2}^r q_j>2.
  \label{eq:higher-product-expansion}
\end{align}
In \eqref{eq:higher-product-decomposition}, each parenthesized difference
is at least
$\prod_{j=2}^r q_j$, and this product is greater than $1$ because
$r\geq3$ and $q_j=p_j^k\geq4$.
Thus no squarefree composite is a zero.

\smallskip
\noindent\emph{Case II: A prime cube divides $n$.}
If $p^3\mid n$ for some prime $p$, then
\begin{equation}\label{eq:higher-cube-components}
  \Dinv{\sigma_k}(n)=0,
  \qquad
  \mu(n)=0,
  \qquad
  J_k(n)=n^k\prod_{q\mid n}(1-q^{-k})>0.
\end{equation}
Therefore, \eqref{eq:higher-cube-components} and
\eqref{eq:F-definition} give
\begin{equation}\label{eq:Fk-cube-form}
  F_k(n)=J_k(n)+3>0.
\end{equation}
Thus this case contains no zeros.

\smallskip
\noindent\emph{Case III: The integer $n$ is cube-free but not squarefree.}
Write
\begin{equation}\label{eq:higher-ab2-factorization}
  n=ab^2,
  \qquad
  a,b\text{ squarefree},
  \qquad
  \gcd(a,b)=1,
  \qquad
  b>1.
\end{equation}
In \eqref{eq:higher-ab2-factorization}, the primes dividing $a$ occur to
the first power in $n$, while the primes dividing $b$ occur to the second
power.  Therefore,
\begin{equation}\label{eq:higher-ab2-components}
  \begin{aligned}
    \Dinv{\sigma_k}(n)&=\mu(a)b^k\sigma_k(a),\\
    \mu(n)&=0,\\
    J_k(n)&=b^kJ_k(a)J_k(b).
  \end{aligned}
\end{equation}
Therefore, \eqref{eq:higher-ab2-components} and
\eqref{eq:F-definition} give
\begin{equation}\label{eq:Fk-ab2-form}
  F_k(n)
  =b^k\bigl(\mu(a)\sigma_k(a)+J_k(a)J_k(b)\bigr)+3.
\end{equation}
If $F_k(n)=0$, then \eqref{eq:Fk-ab2-form} gives
\begin{equation}\label{eq:higher-ab2-divisibility}
  b^k\bigl(\mu(a)\sigma_k(a)+J_k(a)J_k(b)\bigr)=-3.
\end{equation}
We now expand both factors on the left.  Since $a$ and $b$ are squarefree,
write
\begin{equation}\label{eq:higher-a-b-prime-factorizations}
  a=\prod_{i=1}^{s}u_i,
  \qquad
  b=\prod_{\ell=1}^{t}v_\ell,
  \qquad s\geq0,\qquad t\geq1.
\end{equation}
In \eqref{eq:higher-a-b-prime-factorizations}, the $u_i$ and $v_\ell$ are
primes.  Set
$x_i=u_i^k$ and $y_\ell=v_\ell^k$.  With the convention that an empty
product equals $1$, the four terms in the bracket are
\begin{equation}\label{eq:higher-bracket-components}
  \begin{aligned}
    \mu(a)&=(-1)^s,
    &\sigma_k(a)&=\prod_{i=1}^{s}(x_i+1),\\
    J_k(a)&=\prod_{i=1}^{s}(x_i-1),
    &J_k(b)&=\prod_{\ell=1}^{t}(y_\ell-1).
  \end{aligned}
\end{equation}
Also,
\begin{equation}\label{eq:higher-b-lower-bound}
  b^k=\prod_{\ell=1}^{t}y_\ell\geq2^k\geq4.
\end{equation}
Set
\begin{equation}\label{eq:higher-A-B-definition}
  A:=\prod_{i=1}^{s}(x_i+1),
  \qquad
  B:=\prod_{i=1}^{s}(x_i-1)
       \prod_{\ell=1}^{t}(y_\ell-1).
\end{equation}
Using \eqref{eq:higher-bracket-components} and
\eqref{eq:higher-A-B-definition}, the bracketed factor becomes the integer
\begin{equation}\label{eq:higher-T-definition}
  T:=\mu(a)\sigma_k(a)+J_k(a)J_k(b)=(-1)^sA+B.
\end{equation}
By \eqref{eq:higher-T-definition}, if $s$ is even, then $T=A+B>0$, so
$b^kT>0$.  If $s$ is odd, then
$T=-A+B$.  When $B\geq A$, we have $T\geq0$; when $B<A$, integrality gives
$T\leq-1$, and hence \eqref{eq:higher-b-lower-bound} gives
$b^kT\leq-4$.  In no case can $b^kT=-3$, contradicting
\eqref{eq:higher-ab2-divisibility}.  Therefore no composite integer is a
zero when $k\geq2$.
By \eqref{eq:F-at-prime}, every prime is a zero, so the zeros for $k\geq2$
are exactly the primes.
This completes the proof.
\end{proof}

\section{An elementary companion criterion}

The established local values of $\Dinv{\sigma_k}$ also give a shorter
prime criterion.  We record it to compare with the exceptional zero of
the three-function equation.

\begin{theorem}\label{thm:companion}
For every $k\geq1$ and $n>1$,
\begin{equation}\label{eq:companion-criterion}
  \Dinv{\sigma_k}(n)+n^k=-1
  \quad\Longleftrightarrow\quad
  n\text{ is prime}.
\end{equation}
\end{theorem}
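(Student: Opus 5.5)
We prove Theorem~\ref{thm:companion} by reusing the three prime-exponent patterns of Proposition~\ref{prop:factorization-cases}, exactly as in the proof of Theorem~\ref{thm:two-function}. For a prime $p$, \eqref{eq:local-inverse} gives $\Dinv{\sigma_k}(p)+p^k=-(1+p^k)+p^k=-1$, which settles one direction. For the converse we assume $n$ is composite and show $\Dinv{\sigma_k}(n)+n^k+1\neq0$ in each case.

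\emph{Case II.} If a prime cube divides $n$, then $\Dinv{\sigma_k}(n)=0$, so the left side of \eqref{eq:companion-criterion} equals $n^k>0\neq-1$.

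\emph{Case I.} Let $n=\prod_{j=1}^r p_j$ be squarefree with $r\geq2$, and set $q_j=p_j^k$. If $r$ is even, then $\Dinv{\sigma_k}(n)>0$ and the sum is positive. If $r$ is odd, then $r\geq3$, and the sum equals $-\prod_j(q_j+1)+\prod_j q_j$. Expanding $\prod_j(q_j+1)$ as a sum over subsets of indices, this difference is minus the sum over all proper subproducts $\prod_{j\in S}q_j$ with $S\neq\{1,\dots,r\}$. That sum includes the empty product $1$ and at least one further positive term, for instance $q_1$. Hence the value is at most $-1-q_1<-1$. This is the intermediate-divisor phenomenon: the proper divisors $d$ with $1<d<n$ contribute $d^k$ and push the value strictly below $-1$.

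\emph{Case III.} Let $n=ab^2$ with $b>1$, as in \eqref{eq:ab2-factorization}. Then $\Dinv{\sigma_k}(n)+n^k=b^k\bigl(\mu(a)\sigma_k(a)+a^kb^k\bigr)$. If $\mu(a)=1$, this is positive. If $\mu(a)=-1$, then $a>1$ and $a^kb^k\geq2a^k>\sigma_k(a)$, since $\sigma_k(a)\leq a^k\prod_{p\mid a}(1-p^{-k})^{-1}<a^k\cdot\zeta(k)$, which is at most $2a^k$ for $k\geq2$. For $k=1$ it is safer to argue by divisibility, because $b^k\geq2$ divides the expression. A value of $-1$ would force $b^k\mid1$, which is impossible since $b>1$. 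This divisibility argument works uniformly for all $k$ and makes the estimate unnecessary.

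The main obstacle is therefore only the odd-$r$ squarefree case, and the subset expansion above handles it cleanly.
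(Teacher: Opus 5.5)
Your proof is correct and follows the paper's case structure. Both arguments use positivity when a prime cube divides $n$ or $r$ is even, divisibility in the cube-free nonsquarefree case (you factor out $b^k$ where the paper factors out $p^k$ for a single squared prime), and a product expansion in the odd-$r$ squarefree case. The $\zeta(k)$ estimate in your Case III is unnecessary once the divisibility argument is in place, so you can drop it.
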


\begin{proof}
We first verify the equality for primes and then exclude every composite
integer through the following exhaustive cases.

\smallskip
\noindent\emph{Case I: The integer $n$ is prime.}
Write $n=p$, where $p$ is prime.  By \eqref{eq:local-inverse}, the two terms in
\eqref{eq:companion-criterion} and their sum are
\begin{equation}\label{eq:companion-prime-components}
  \begin{aligned}
    \Dinv{\sigma_k}(p)&=-(1+p^k),\\
    n^k&=p^k,\\
    \Dinv{\sigma_k}(p)+p^k
      &=-(1+p^k)+p^k=-1.
  \end{aligned}
\end{equation}
Thus every prime satisfies the equation on the left side of
\eqref{eq:companion-criterion}.

For the converse, let $n>1$ be composite.

\smallskip
\noindent\emph{Case II: A prime cube divides $n$.}
Write
\begin{equation}\label{eq:companion-cube-factorization}
  n=p^a m,
  \qquad
  a\geq3,
  \qquad
  \gcd(p,m)=1,
\end{equation}
where $p$ is prime and $m\in\N$.  By multiplicativity and
\eqref{eq:local-inverse}, the two terms and their sum are
\begin{equation}\label{eq:companion-cube-components}
  \begin{aligned}
    \Dinv{\sigma_k}(n)&=0,\\
    n^k&=p^{ak}m^k,\\
    \Dinv{\sigma_k}(n)+n^k
      &=0+p^{ak}m^k=p^{ak}m^k>0.
  \end{aligned}
\end{equation}
Therefore the sum cannot equal $-1$, so this case contains no solutions.

\smallskip
\noindent\emph{Case III: The integer $n$ is cube-free but not squarefree.}
Some prime $p$ occurs to exponent $2$.  Write
\begin{equation}\label{eq:companion-repeated-prime}
  n=p^2m,
  \qquad
  \gcd(p,m)=1.
\end{equation}
Here $p$ is prime and $m$ is cube-free.  By multiplicativity and
\eqref{eq:local-inverse}, the two terms and their sum are
\begin{equation}\label{eq:companion-repeated-prime-components}
  \begin{aligned}
    \Dinv{\sigma_k}(n)&=p^k\Dinv{\sigma_k}(m),\\
    n^k&=p^{2k}m^k,\\
    \Dinv{\sigma_k}(n)+n^k
      &=p^k\Dinv{\sigma_k}(m)+p^{2k}m^k\\
      &=p^k\bigl(\Dinv{\sigma_k}(m)+p^km^k\bigr).
  \end{aligned}
\end{equation}

The quantity in parentheses is an integer.  If it is zero, then
\eqref{eq:companion-repeated-prime-components} gives
$\Dinv{\sigma_k}(n)+n^k=0$.  If it is nonzero, then the sum is a nonzero
multiple of $p^k$, so its absolute value is at least $p^k\geq2$.  In either
case, the sum cannot equal $-1$.

\smallskip
\noindent\emph{Case IV: The integer $n$ is squarefree and composite.}
Write
\begin{equation}\label{eq:companion-squarefree-factorization}
  n=\prod_{j=1}^{r}p_j,
  \qquad
  q_j=p_j^k,
  \qquad
  r\geq2,
\end{equation}
where the primes $p_j$ are distinct.  By \eqref{eq:case-squarefree}, the two
terms and their sum are
\begin{equation}\label{eq:companion-squarefree-components}
  \begin{aligned}
    \Dinv{\sigma_k}(n)
      &=(-1)^r\prod_{j=1}^{r}(q_j+1),\\
    n^k&=\prod_{j=1}^{r}q_j,\\
    \Dinv{\sigma_k}(n)+n^k
      &=(-1)^r\prod_{j=1}^{r}(q_j+1)
        +\prod_{j=1}^{r}q_j.
  \end{aligned}
\end{equation}

\noindent\emph{Subcase IV(a): $r$ is even.}
Under this condition,
\begin{equation}\label{eq:companion-squarefree-even}
  \Dinv{\sigma_k}(n)+n^k
  =\prod_{j=1}^{r}(q_j+1)+\prod_{j=1}^{r}q_j>0.
\end{equation}
Therefore the sum cannot equal $-1$.

\smallskip
\noindent\emph{Subcase IV(b): $r$ is odd.}
Since $n$ is composite, this subcase has $r\geq3$.  Expanding the difference
of products below gives a sum of positive monomials.  It contains
$\prod_{j=2}^{r}q_j$ together with additional positive terms, so the
difference is greater than $q_2q_3$.  Therefore,
\begin{equation}\label{eq:companion-squarefree-odd}
  \Dinv{\sigma_k}(n)+n^k
  =-\prod_{j=1}^{r}(q_j+1)+\prod_{j=1}^{r}q_j
  =-\left(
      \prod_{j=1}^{r}(q_j+1)-\prod_{j=1}^{r}q_j
    \right)<-q_2q_3<-1.
\end{equation}
Therefore the sum cannot equal $-1$.

Cases II--IV exclude every composite integer, while Case I gives every prime.
This proves \eqref{eq:companion-criterion}.
\end{proof}

\medskip

\section{The intermediate-divisor relation}\label{sec:remainder}

There is also an exact relation between the three-function and $n^k$
criteria.  It shows how the
criterion changes when only the endpoint terms of the divisor sums are
kept.  For $k\geq1$ and $n>1$, define
\begin{equation}\label{eq:remainder-definition}
  R_k(n):=\sum_{\substack{d\mid n\\1<d<n}}
              \bigl(\mu(d)+J_k(d)\bigr).
\end{equation}
An empty sum in \eqref{eq:remainder-definition} is understood to be $0$.
Adding the divisor identities \eqref{eq:mobius-divisor} and
\eqref{eq:jordan-divisor}, and using $\eps(n)=0$ for $n>1$, gives
\begin{equation}\label{eq:power-decomposition}
  \begin{aligned}
    n^k
    &=\sum_{d\mid n}\bigl(\mu(d)+J_k(d)\bigr)\\
    &=2+\mu(n)+J_k(n)+R_k(n),
  \end{aligned}
\end{equation}
where the second line separates the contributions from $d=1$, $1<d<n$,
and $d=n$, and uses $\mu(1)+J_k(1)=2$.

\begin{proposition}\label{prop:remainder}
For every $k\geq1$ and $n>1$,
\begin{equation}\label{eq:remainder-bridge}
  F_k(n)+R_k(n)=\Dinv{\sigma_k}(n)+n^k+1.
\end{equation}
Consequently,
\begin{equation}\label{eq:corrected-criterion}
  F_k(n)+R_k(n)=0
  \quad\Longleftrightarrow\quad
  n\text{ is prime}.
\end{equation}
\end{proposition}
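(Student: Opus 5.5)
The identity \eqref{eq:remainder-bridge} is a direct algebraic consequence of the decomposition \eqref{eq:power-decomposition}, and the criterion \eqref{eq:corrected-criterion} then follows from Theorem~\ref{thm:companion}. The plan is to prove the identity first and then transfer the prime characterization.

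\emph{Step 1: the identity.} Fix $k\geq1$ and $n>1$. By \eqref{eq:power-decomposition},
\[
R_k(n)=n^k-2-\mu(n)-J_k(n).
\]
Add this to the definition \eqref{eq:F-definition}, $F_k(n)=\Dinv{\sigma_k}(n)+\mu(n)+J_k(n)+3$. The terms $\mu(n)$ and $J_k(n)$ cancel, and the constants combine as $3-2=1$. This gives $F_k(n)+R_k(n)=\Dinv{\sigma_k}(n)+n^k+1$, which is \eqref{eq:remainder-bridge}.

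The only point to check is that \eqref{eq:power-decomposition} is valid for every $n>1$. It rests on adding \eqref{eq:mobius-divisor} with $\eps(n)=0$ to \eqref{eq:jordan-divisor}. It then splits off $d=1$, which contributes $\mu(1)+J_k(1)=2$, and $d=n$; this requires $1\neq n$, which holds since $n>1$. The middle range $1<d<n$ is exactly the sum defining $R_k(n)$, with the empty-sum convention covering the case where $n$ is prime.

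\emph{Step 2: the criterion.} By Step~1, $F_k(n)+R_k(n)=0$ holds if and only if $\Dinv{\sigma_k}(n)+n^k=-1$. By Theorem~\ref{thm:companion}, which is stated for every $k\geq1$ and $n>1$, this holds if and only if $n$ is prime. That proves \eqref{eq:corrected-criterion}.

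As a consistency check, for $n=p$ prime we have $R_k(p)=0$ and $F_k(p)=0$ by \eqref{eq:F-at-prime}, in agreement with the criterion.

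I expect no real obstacle here. The one subtlety is bookkeeping in the endpoint split of \eqref{eq:power-decomposition}: the terms $d=1$ and $d=n$ must be distinct, which is why $n>1$ is required. It may also be worth noting the explanatory consequence. At $k=1$ the divisors of $18$ strictly between $1$ and $18$ give $R_1(18)\neq0$, and this is why $18$ is a zero of $F_1$ but not of the corrected expression.
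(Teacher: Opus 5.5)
Your proposal is correct and is the paper's own argument: you rewrite \eqref{eq:power-decomposition} as $R_k(n)=n^k-2-\mu(n)-J_k(n)$ and add it to \eqref{eq:F-definition} to obtain \eqref{eq:remainder-bridge}, and the criterion is then Theorem~\ref{thm:companion} with $1$ added to both sides. You also note that $n>1$ is needed both for $\eps(n)=0$ and to keep the endpoint terms $d=1$ and $d=n$ distinct, which the paper leaves implicit.
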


\begin{proof}
Using the definition of $F_k$ and then \eqref{eq:power-decomposition},
\begin{align}
  F_k(n)+R_k(n)
  &=\Dinv{\sigma_k}(n)+\mu(n)+J_k(n)+3+R_k(n)\\
  &=\Dinv{\sigma_k}(n)+n^k+1.
\end{align}
Equation \eqref{eq:corrected-criterion} is now exactly
Theorem~\ref{thm:companion} with $1$ added to both sides.
\end{proof}

For a prime $p$, there are no intermediate divisors and $R_k(p)=0$.  At the
exceptional composite for $k=1$, however,
\begin{equation}
  F_1(18)=0,
  \qquad
  R_1(18)=18-2-\mu(18)-\varphi(18)=10,
\end{equation}
and hence $F_1(18)+R_1(18)=10\neq0$.  Thus the remainder term explains
precisely why the exact divisor relation rejects the composite exception
whereas the shortened three-function identity does not.

\section{Conclusion}

For $k,n\in\N$ in the first two equivalences, and for $k\geq1$ and
$n>1$ in the third, the results proved in this note are
\begin{equation}
\begin{aligned}
  \Dinv{\sigma_k}(n)+J_k(n)+2=0
  &\quad\Longleftrightarrow\quad
  n\text{ is prime}\qquad(k\geq1),\\[3pt]
  \Dinv{\sigma_k}(n)+\mu(n)+J_k(n)+3=0
  &\quad\Longleftrightarrow\quad
  \begin{cases}
    n\text{ is prime or }n=18, & k=1,\\
    n\text{ is prime}, & k\geq2,
  \end{cases}\\[3pt]
  \Dinv{\sigma_k}(n)+n^k=-1
  &\quad\Longleftrightarrow\quad
  n\text{ is prime}\qquad(k\geq1,\ n>1).
\end{aligned}
\end{equation}
The three-function classification is the main result: its only
composite zero is $18$, and only when $k=1$.  The first and third
equivalences are elementary comparisons derived from the same known
local values of the inverse function.  The generating equations
\eqref{eq:mobius-generating}, \eqref{eq:jordan-generating}, and
\eqref{eq:inverse-generating} supply standard Lambert-series forms,
while \eqref{eq:remainder-bridge} shows why retaining intermediate
divisors removes the exceptional value $18$.  These are structural
characterizations, not computational substitutes for standard
primality tests.

\section*{Use of generative AI-assisted technologies}

OpenAI Codex assisted with language, references, and \LaTeX{} and PDF
preparation, as well as the formulation and proof check of the added
two-function criterion.  The main three-function classification is the
author's work.  The author will independently verify all mathematical
claims and sources before submission and takes full responsibility for
the manuscript.

\end{document}